\DeclarePairedDelimiter\ceil{\lceil}{\rceil}
\DeclarePairedDelimiter\floor{\lfloor}{\rfloor}
\begin{document}

\title{Approximation by Exponential Type Neural Network Operators
}

\titlerunning{Approximation by Exponential type Neural Network Operators}        

\author{ Bajpeyi Shivam $^{*}$   \and
               A. Sathish Kumar   
}



\institute{S. Bajpeyi $^{*}$(corresponding author) \at
              Department of Mathematics, Visvesvaraya National Institute of Technology, Nagpur-440010
               \\
                          \email{shivambajpai1010@gmail.com } \vspace{4mm} \at  A. Sathish Kumar \at Department of Mathematics, Visvesvaraya National Institute of Technology, Nagpur-440010 \at \email{ mathsathish9@gmail.com}  }

\date{Received: date / Accepted: date}

\maketitle

\begin{abstract}

In the present article, we introduce and study the behaviour of the new family of exponential type neural network operators activated by the sigmoidal functions. We establish the point-wise and uniform approximation theorems for these NN (Neural Network) operators in $ \mathcal{C}[a,b].$
Further, the quantitative estimates of order of approximation for the proposed NN operators in $C^{(N)}[a,b]$ are established in terms of the modulus of continuity. We also analyze the behaviour of the family of exponential type quasi-interpolation operators in $ \mathcal{C}(\mathbb{R}^{+}).$ Finally, we discuss the multivariate extension of these NN operators and some examples of the sigmoidal functions.

\keywords{Exponential sampling type operators \and Neural Network operators \and Order of convergence \and Sigmoidal functions \and Mellin transform }
\subclass{41A35 \and 92B20 \and 94A20  \and 41A25}
\end{abstract}

\section{Introduction}
\label{intro}

The exponential sampling methods are powerful tool to solve problems arising in the areas of optical physics and engineering, precisely in the phenomena like Fraunhofer diffraction, light scattering etc. \cite{bertero,casasent,gori,ostrowsky}. It all started when a group of optical physicists and engineers Bartero, Pike \cite{bertero} and Gori \cite{gori} proposed a representation formula known as \textit{exponential sampling formula} (\cite{bardaro7}), for the class of Mellin band-limited function having exponentially spaced sample points. This exponential sampling formula is also considered as the Mellin-version of the well known \textit{Shannon sampling theorem} (\cite{shannon}). Butzer and Jansche (\cite{butzer5}) pioneered the mathematical study of the exponential sampling formula. They established a rigorous proof of this formula mathematically using the theory of Mellin transform and Mellin approximation, which was first studied separately by Mamedov \cite{mamedeo}, and then developed by Butzer and Jansche \cite{butzer3,butzer4,butzer5,butzer7}. We mention some of the work related to the theory of Mellin transform and approximation in \cite{bardaro1,bardaro2,bardaro3}. \par
Bardaro et al.\cite{bardaro7} introduced the generalized version of the exponential sampling series, acting on functions which are not necessarily Mellin band-limited, by replacing the $lin_{c}$ function in the exponential sampling formula, by more general kernel function. The generalized exponential sampling series is defined by
\begin{equation} \label{classical}
(S_{w}^{\chi}f)(x)= \sum_{k=- \infty}^{+\infty} \chi(e^{-k} x^{w}) f( e^{\frac{k}{w}}), \hspace*{0.3cm} \forall x \in \mathbb{R}^{+}, w>0
\end{equation}
where $ f:\mathbb{R}^{+} \rightarrow \mathbb{R} $ be any function for which the series is absolutely convergent. Subsequently, Bardaro et.al. extended the study of convergence of the above series in Mellin-Lebesgue spaces in \cite{bardaro11}. Recently, Balsamo and Mantellini \cite{Balsamo} considered the linear combination of these operators and analyzed its approximation results.\par

Let $C(\mathbb{R}^+$) denotes the space of all uniformly continuous functions defined on $\mathbb{R}^+,$ where $\mathbb{R}^{+}$ denotes the set of all positive real numbers. We call a function $f \in C(\mathbb{R}^+$) log-uniformly continuous on $\mathbb{R}^+$, if for any given
$\epsilon > 0,$ there exists $\delta > 0$ such that $|f(x) -f(y)| < \epsilon$ whenever $| \log x - \log y | \leq \delta,$ for any $x, y \in \mathbb{R}^{+}.$ We denote the space of all log-uniformly continuous functions defined on  $\mathbb{R}^{+}$ by $\mathcal{C}(\mathbb{R}^+).$ It is to mention that a log-uniformly continuous function need not uniformly continous. We consider $M(\mathbb{R}^{+})$ as the class of all Lebesgue measurable functions on $\mathbb{R}^+$ and $L^{\infty}(\mathbb{R}^{+}) $ as the space of all bounded functions on  $\mathbb{R}^{+}.$\par

Let $ L^p(\mathbb{R}^{+}), \ 1 \leq p < +\infty$ denotes the space of all the Lebesgue measurable functions defined on $\mathbb{R}^+,$ equipped with the usual norm $\Vert f \Vert_p$. For $c \in \mathbb{R}$, we define the space
$$X_c = \{f : \mathbb{R}^+ \rightarrow \mathbb{C} : f(\cdot)(\cdot)^{c-1} \in  L^1(\mathbb{R}^+)\}$$ equipped with the norm
$$\Vert f \Vert_{X_c} = \Vert f(\cdot)(\cdot)^{c-1} \Vert_1 = \int_0^{+\infty} |f(u)|u^{c-1}du.  $$
The Mellin transform of a function $f \in X_c$ is defined by
$$\hat{M}[f](s) := \int_0^{+\infty} u^{s-1}f(u)\ du \ ,(s = c + it, t \in \mathbb{R}).$$
We call a function $f \in X_{c} \cap C(\mathbb{R}^+),\  c \in  \mathbb{R},$ Mellin
band-limited in the interval $[-\eta, \eta],$ if $\hat{M}[f](c+iw) = 0$ for all $|w| > \eta ,\ \eta \in \mathbb{R}^+.$ \par
For a differentiable function $f : \mathbb{R}^+ \rightarrow \mathbb{C},$ the Mellin differential operator $\theta_c$, is defined as
$$\theta_cf(x) := xf'(x) + cf(x), \ \ \ \ \ x \in  \mathbb{R}^+  \mbox{and}\ c \in \mathbb{R}.$$
We set $  \theta_{0} f(x) := \theta f(x) .$ The Mellin differential operator of order $r \in \mathbb{N}$ is
defined by the following relation,
$$\theta_c^1 := \theta_c, \ \ \ \ \ \ \ \ \ \theta_c^{(r)} = \theta_c(\theta_c^{(r-1)}).$$

\section{Auxiliary results}
In this section, we give some preliminary definitions and related results which will be helpful to study the approximation properties of the family of operators $ (E^{\chi_{\sigma}}_{n}f).$ We begin with the definition of \textit{sigmoidal function}. A function $\sigma: \mathbb{R} \rightarrow \mathbb{R}$ is sigmoidal function if and only if $\displaystyle \lim_{x \rightarrow - \infty} \sigma(x)=0$ and $ \displaystyle  \lim_{x \rightarrow \infty} \sigma(x)=1.$ In the present context, we assume $\sigma(x)$ to be non-decreasing function with $\sigma(2) > \sigma(0)$ satisfying the following conditions,\\

$(1)$ $\sigma \in C^{(2)}(\mathbb{R})$ and concave on $\mathbb{R}^{+}.$ \par

$(2)$ $\sigma(x)= \mathcal{O}(|x|^{-1- \nu}),$ as $ x \rightarrow - \infty,$ for some $ \nu >0.$ \par

$(3)$ The function $(\sigma(x)- \frac{1}{2} \big)$ is an odd function.\\

Now, we define the density function activated by the sigmoidal function $\sigma$ by the following linear combination,
$$ \chi_{\sigma} (x) := \frac{1}{2} \big[ \sigma(\log x+1) - \sigma(\log x-1)\big] \ , \hspace{0.3cm} x \in \mathbb{R}^{+}.$$

The algebraic moment of order $\nu$ for the density function $ \chi_{\sigma}$ is defined as,
$$ m_{\nu}(\chi_{\sigma},u):= \sum_{k= - \infty}^{+\infty}  \chi_{\sigma}(e^{-k} u) (k- \log(u))^{\nu}, \hspace{0.5cm} \forall \ u \in \mathbb{R}^{+}.$$ \par
Similarly, the absolute moment of order $\nu$ can be defined as,
$$ M_{\nu}(\chi_{\sigma},u):= \sum_{k= - \infty}^{+\infty}  |\chi_{\sigma}(e^{-k} u)| |k- \log(u)|^{\nu},  \hspace{0.5cm} \forall \ u \in \mathbb{R}^{+}.$$ \par
We define \ $ \displaystyle M_{\nu}(\chi_{\sigma}):= \sup_{u \in \mathbb{R}^{+}} M_{\nu}(\chi_{\sigma},u). $\\

Now, we derive some basic results related to the density function $\chi_{\sigma}.$

\begin{lemma} \label{Lemma1}
The condition $\displaystyle \sum_{k=- \infty}^{+\infty} \chi_{\sigma}(e^{-k} u ) =1,$ holds for every $u \in \mathbb{R}^{+}$ and $k \in \mathbb{Z}.$
\end{lemma}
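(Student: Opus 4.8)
The plan is to recognise the series as a telescoping sum. Writing $t=\log u$ and using the definition of the density, one has
$$\chi_{\sigma}(e^{-k}u)=\tfrac12\big[\sigma(t-k+1)-\sigma(t-k-1)\big],$$
so that the symmetric partial sum is $\tfrac12\sum_{k=-M}^{M}\big[\sigma(t-k+1)-\sigma(t-k-1)\big]$. Reindexing the two pieces separately (replacing $k$ by $k-1$ in the sum of the $\sigma(t-k+1)$ terms and by $k+1$ in the sum of the $\sigma(t-k-1)$ terms) makes the interior terms cancel and leaves the four boundary terms, so the partial sum equals $\tfrac12\big[\sigma(t+M+1)+\sigma(t+M)-\sigma(t-M)-\sigma(t-M-1)\big]$. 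Letting $M\to\infty$ and invoking the defining limits of a sigmoidal function, $\sigma(t+M+1),\sigma(t+M)\to 1$ and $\sigma(t-M),\sigma(t-M-1)\to 0$, yields the value $\tfrac12(1+1-0-0)=1$, which is the assertion (the convergence being uniform on bounded ranges of $t=\log u$).

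Before carrying this out I would record that the series converges absolutely, so that working with symmetric partial sums is legitimate and the sum is independent of the order of summation. As $k\to+\infty$ we have $e^{-k}u\to 0^{+}$, hence $\log(e^{-k}u)\to-\infty$, and condition $(2)$ gives $\chi_{\sigma}(e^{-k}u)=\mathcal O(|k|^{-1-\nu})$. For $k\to-\infty$, the oddness in condition $(3)$ gives $\sigma(y)=1-\sigma(-y)$, from which a short computation shows $\chi_{\sigma}(x)=\chi_{\sigma}(1/x)$; applying the previous estimate to $1/(e^{-k}u)\to 0^{+}$ gives the same decay on that side. Since $\sum|k|^{-1-\nu}<\infty$, absolute convergence follows.

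There is no genuinely hard step; the only points requiring care are the bookkeeping in the reindexing (checking that exactly the stated four boundary terms survive) and establishing the decay at $-\infty$ from conditions $(2)$ and $(3)$ rather than assuming it. If one prefers to avoid the identity $\chi_{\sigma}(x)=\chi_{\sigma}(1/x)$, the decay for $k\to-\infty$ can instead be obtained directly from the monotonicity and concavity hypotheses on $\sigma$, but the route through the oddness of $\sigma-\tfrac12$ is the shortest.
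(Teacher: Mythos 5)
Your proof is correct and takes essentially the same route as the paper's: write $\chi_{\sigma}(e^{-k}u)=\tfrac12\big[\sigma(\log u-k+1)-\sigma(\log u-k-1)\big]$, telescope the symmetric partial sums down to the four boundary terms $\tfrac12\big[\sigma(\log u+M+1)+\sigma(\log u+M)-\sigma(\log u-M)-\sigma(\log u-M-1)\big]$, and let $M\to\infty$ using the defining limits of the sigmoidal function. Your added justification that symmetric partial sums suffice (via absolute convergence from conditions $(2)$--$(3)$, or simply from nonnegativity of the terms) is a harmless extra that the paper leaves implicit.
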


\begin{proof}
For any fixed $ x \in \mathbb{R}^{+}$ and $n \in \mathbb{N},$ we have
$$ \sum_{k= -n}^{n} \big( \sigma(\log x+1-k) -\sigma(\log x -k)\big) = \sigma(\log x+1+n) -\sigma(\log x -n). $$

Similarly,
$$  \sum_{k= -n}^{n} \big( \sigma(\log x-k) -\sigma(\log x -1-k)\big) = \sigma(\log x+n) -\sigma(\log x -n-1). $$

Using the definition of $\chi_{\sigma},$ we obtain
\begin{eqnarray*}
\displaystyle \sum_{k=- \infty}^{+\infty} \chi_{\sigma}(e^{-k} u ) &=& \frac{1}{2} \big[ \sigma(\log(u e^{-k})+1) -  \sigma(\log (u e^{-k})-1) \big ] \\
&=& \frac{1}{2} \big[ \sigma(\log u-k+1) -\sigma(\log u -k-1) \big] \\
&=& \frac{1}{2} \big[ (\sigma(\log u-k+1) - \sigma(\log u -k)) + (\sigma(\log u -k) -\sigma(\log u -k-1))\big] \\
&=& \frac{1}{2} \big[ \sigma(\log u+n+1) -\sigma(\log u -n) + \sigma(\log u+n) -\sigma(\log u -n-1) \big]
\end{eqnarray*}

As $ n \rightarrow + \infty,$ we obtain the desired result.

\end{proof}

\begin{lemma} \label{Lemma2}
Let $ x \in [a,b] ( \subset \mathbb{R}^{+})$ and $n \in \mathbb{N},\ k \in \mathbb{Z}. $ Then, we have
 $$ \displaystyle \sum_{k=\ceil{na}}^{\floor{nb}} \chi_{\sigma} (e^{-k} x^{n}) \geq \chi_{\sigma}(e).$$

\end{lemma}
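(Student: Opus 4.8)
The plan is to reduce the inequality to a single summand and then to a ``bump shape'' property of $\chi_\sigma$. First I would record that, since $\sigma$ is non-decreasing, $\chi_\sigma(x)=\tfrac12\big[\sigma(\log x+1)-\sigma(\log x-1)\big]\ge 0$ for every $x\in\mathbb{R}^+$; hence the sum dominates any one of its terms, and it suffices to produce a single index $k^{\ast}\in\{\ceil{na},\dots,\floor{nb}\}$ with $\chi_\sigma(e^{-k^{\ast}}x^{n})\ge\chi_\sigma(e)$. It is convenient to set $g(t):=\sigma(t+1)-\sigma(t-1)$, so that $\chi_\sigma(e^{-k}x^{n})=\tfrac12\,g(n\log x-k)$ and, since $\log e=1$, $\chi_\sigma(e)=\tfrac12\,g(1)$. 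Thus the task becomes: find $k^{\ast}$ in the summation range with $g(n\log x-k^{\ast})\ge g(1)$.

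The heart of the argument is the claim that $g(t)\ge g(1)$ whenever $|t|\le 1$, and this is where both structural hypotheses on $\sigma$ enter. Oddness of $\sigma-\tfrac12$, i.e. $\sigma(-s)=1-\sigma(s)$, makes $g$ even: $g(-t)=\sigma(1-t)-\sigma(-1-t)=(1-\sigma(t-1))-(1-\sigma(t+1))=g(t)$. Differentiating $\sigma(-s)=1-\sigma(s)$ shows $\sigma'$ is even, while concavity of $\sigma\in C^{(2)}$ on $\mathbb{R}^{+}$ makes $\sigma'$ non-increasing on $\mathbb{R}^{+}$. I would then show $g'(t)=\sigma'(t+1)-\sigma'(t-1)\le 0$ for all $t\ge 0$: for $t\ge 1$ both arguments are non-negative and $t+1>t-1$, so monotonicity of $\sigma'$ on $\mathbb{R}^{+}$ gives $g'(t)\le 0$; for $0\le t<1$ one rewrites $\sigma'(t-1)=\sigma'(1-t)$ by evenness and observes $t+1\ge 1-t\ge 0$, so again $g'(t)\le 0$. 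Hence $g$ is non-increasing on $[0,\infty)$, and combined with evenness, $g(t)=g(|t|)\ge g(1)$ for $|t|\le 1$, as claimed.

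It then remains to select $k^{\ast}$ in the summation range with $|n\log x-k^{\ast}|\le 1$. Since $x\in[a,b]$, the point $n\log x$ lies in the interval whose endpoints govern the summation limits, so taking $k^{\ast}$ to be an integer nearest to $n\log x$ (replaced by the relevant endpoint index $\ceil{na}$ or $\floor{nb}$ in the two boundary cases, where the fractional parts are still $<1$) yields $|n\log x-k^{\ast}|\le 1$. By the previous paragraph $\chi_\sigma(e^{-k^{\ast}}x^{n})=\tfrac12\,g(n\log x-k^{\ast})\ge\tfrac12\,g(1)=\chi_\sigma(e)$, and discarding the remaining non-negative terms gives the assertion.

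The step I expect to be the main obstacle is the monotonicity of $g$ on $[0,\infty)$, specifically the regime $0\le t<1$, where one of the arguments $t-1$ is negative: there one genuinely needs to marry concavity on $\mathbb{R}^{+}$ with the evenness of $\sigma'$ supplied by the symmetry hypothesis, since without both the bump $g$ need not decrease monotonically away from its centre. A secondary, purely bookkeeping point is checking that the nearest-integer choice, or its endpoint correction, indeed falls inside $\{\ceil{na},\dots,\floor{nb}\}$.
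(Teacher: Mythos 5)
Your proposal is correct and follows the same overall strategy as the paper---discard all but one of the non-negative terms and bound that single term below by $\chi_{\sigma}(e)$---but your execution of the key step is genuinely different, and in fact tighter than the paper's. The paper picks $\hat{k}$ with $2<e^{-\hat{k}}x^{n}<3$ and appeals to $\sigma(2)>\sigma(0)$ together with the monotonicity of $\chi_{\sigma}$ beyond $1$ (its Remark 1); as written this is shaky, since the window $(2,3)$ has logarithmic length $\log(3/2)<1$, so an integer $\hat{k}$ landing in it need not exist, and on the portion of the window above $e$ the monotonicity runs in the wrong direction for the bound $\geq \chi_{\sigma}(e)$. You instead set $g(t)=\sigma(t+1)-\sigma(t-1)$, prove from the symmetry hypothesis (3) and the concavity in (1) that $g$ is even and non-increasing on $[0,\infty)$, and conclude $\chi_{\sigma}(e^{-k^{\ast}}x^{n})=\tfrac12\,g(n\log x-k^{\ast})\geq\tfrac12\,g(1)=\chi_{\sigma}(e)$ for any $k^{\ast}$ with $|n\log x-k^{\ast}|\leq 1$, a window of log-length $2$ that always contains an admissible integer; this gives a complete proof of the bump property the paper only asserts, and makes explicit exactly where (1), (3) and $\sigma(2)>\sigma(0)$ enter. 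One caveat, which you inherit from the paper rather than introduce: your selection of $k^{\ast}$ requires $n\log x$ to lie within distance $1$ of the index set $\{\ceil{na},\dots,\floor{nb}\}$, which holds when $\log x\in[a,b]$ (equivalently, when the limits are read as $\ceil{n\log a}$, $\floor{n\log b}$ for $x\in[a,b]$), but not under the literal pairing of $x\in[a,b]$ with limits $\ceil{na}$, $\floor{nb}$ (e.g.\ $a=1$, $x$ near $1$); the paper's own statement and proof contain the same mismatch, so your bookkeeping is consistent with its intended convention.
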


\begin{proof}
Since $\displaystyle \sum_{k=- \infty}^{+\infty} \chi_{\sigma}(e^{-k} x^n ) =1, $ we have
$ \displaystyle \sum_{k=[na]}^{[nb]} \chi_{\sigma} (e^{-k} x^{n}) \leq  1.$ Now, we have
$$ \displaystyle \sum_{k=\ceil{na}}^{\floor{nb}} \chi_{\sigma} (e^{-k} x^{n}) = \sum_{k=[na]}^{[nb]} \chi_{\sigma} (|e^{-k} x^{n}|)
\geq \chi_{\sigma}(| e^{-\hat{k}} x^n |), \hspace{0.2cm} \forall x \in \mathbb{R}^{+}.$$

Now by choosing $ \hat{k} \in [\ceil{na}, \floor{nb}] \cap \mathbb{Z}$ such that $ 2 < |e^{-\hat{k}} x^{n}| < 3,$ we get $$ \chi_{\sigma} (|e^{-\hat{k}} x^{n}|) \geq \chi_{\sigma}(e) > 0, \ \mbox{as} \ \sigma(2) > \sigma(0).$$

This establishes the result.

\end{proof}

\textbf{Remark 1:} Using the condition $(1),$ it is easy to see that $\chi_{\sigma}(x)$ is non-increasing for $ x > 1 .$
\begin{lemma}\label{Lemma3}
For every $ \eta > 0$ and $u \in \mathbb{R}^+,$ the following condition holds
$$\displaystyle \lim_{n \rightarrow \infty} \sum_{|k - \log u | > n \eta} \chi_{\sigma}(e^{-k} u)=0. $$
\end{lemma}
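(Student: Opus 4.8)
The plan is to bound the tail sum by converting the constraint $|k - \log u| > n\eta$ into a decay estimate coming from condition $(2)$ on $\sigma$, exactly as one does for the classical (non-exponential) neural network and sampling operators. First I would fix $u \in \mathbb{R}^{+}$, write $v = \log u$, and substitute so that the sum becomes $\sum_{|k-v|>n\eta} \chi_{\sigma}(e^{-k}u)$ with $\chi_{\sigma}(e^{-k}u) = \tfrac{1}{2}\bigl[\sigma(v-k+1) - \sigma(v-k-1)\bigr]$. The key point is that by condition $(2)$, $\sigma(x) = \mathcal{O}(|x|^{-1-\nu})$ as $x \to -\infty$, and by condition $(3)$ together with $\lim_{x\to+\infty}\sigma(x) = 1$, we also control $1 - \sigma(x) = \mathcal{O}(|x|^{-1-\nu})$ as $x \to +\infty$. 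Hence, for $|x|$ large, $\chi_{\sigma}(x) = \mathcal{O}\bigl((1+|\log x|)^{-1-\nu}\bigr)$; more precisely there is a constant $C>0$ with $\chi_{\sigma}(e^{-k}u) \le C\,|k - v|^{-1-\nu}$ for $|k-v|$ sufficiently large.

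Next I would split the tail sum into $k > v + n\eta$ and $k < v - n\eta$ and bound each by a comparison with an integral: for $n$ large enough that the decay estimate applies on the whole tail,
\[
\sum_{|k-v|>n\eta} \chi_{\sigma}(e^{-k}u) \;\le\; 2C \sum_{j > n\eta} j^{-1-\nu} \;\le\; 2C \int_{n\eta - 1}^{\infty} t^{-1-\nu}\,dt \;=\; \frac{2C}{\nu}\,(n\eta-1)^{-\nu}.
\]
Letting $n \to \infty$, the right-hand side tends to $0$, which gives the claim. (One should note the bound is in fact uniform in $u$ on bounded $\log$-intervals, though the statement only asks for fixed $u$.)

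The main obstacle — really the only subtle point — is establishing the two-sided decay estimate $\chi_{\sigma}(e^{-k}u) = \mathcal{O}(|k-v|^{-1-\nu})$: condition $(2)$ only gives decay of $\sigma$ at $-\infty$, so I must invoke the oddness of $\sigma(x) - \tfrac12$ (condition $(3)$) to transfer this into decay of $1-\sigma$ at $+\infty$, and then check that in the combination $\tfrac12[\sigma(v-k+1)-\sigma(v-k-1)]$ the leading terms near $+1$ cancel appropriately so that what survives decays like a single power. A clean way to see this is to write $\chi_{\sigma}(e^{-k}u) = \tfrac12\int_{v-k-1}^{v-k+1}\sigma'(t)\,dt$ when $\sigma$ is differentiable (which it is, by condition $(1)$), and use that $\sigma' (t)= \mathcal{O}(|t|^{-2-\nu})$ — or simply bound $\sigma(v-k+1)-\sigma(v-k-1)$ directly by the larger of the two tail estimates on the appropriate side. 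Once this pointwise bound is in hand, the rest is the routine integral comparison above.
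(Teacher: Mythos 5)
Your proposal is correct and follows essentially the same route as the paper: bound $\chi_{\sigma}(e^{-k}u)$ on the tail $k>\log u$ by $\sigma(\log u -k+1)$ and on the tail $k<\log u$ by $1-\sigma(\log u-k-1)=\sigma(1-\log u+k)$ via condition $(3)$, then apply the decay of $\sigma$ at $-\infty$ from condition $(2)$ and let the tail of the convergent series $\sum j^{-1-\nu}$ (which you estimate by an integral, even yielding an explicit rate $\mathcal{O}((n\eta)^{-\nu})$) tend to zero. Only note that your side remark $\sigma'(t)=\mathcal{O}(|t|^{-2-\nu})$ is not guaranteed by the stated assumptions, but it is not needed since your direct bound on $\sigma(v-k+1)-\sigma(v-k-1)$ suffices.
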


\begin{proof} Let $\{ \log u \}$ denotes the fractional part of $\log u$ for every $ u \in \mathbb{R}^{+},$ i.e., $ \{\log u \}= \log u - [\log u],$ where, [ . ] denotes the\textit{ greatest integer function}. Define $ \hat{k} := k - [ \log u ].$ This gives $ |k - \log u|=| \hat{k}-\{\log u \}|.$ Now, for any fixed $\eta > 0,$ we have
\begin{eqnarray*}
\sum_{|k - \log u| > n \eta} \chi_{\sigma}(e^{-k} u) &=& \sum_{| \hat{k}-\{ \log u \} | > n \eta} \chi_{\sigma}(e^{-k} u) \\
& < &  \sum_{| \hat{k}| > n \eta +1} \chi_{\sigma}(e^{-k} u) \\
&=&\left( \sum_{  \hat{k} > n \eta  +1}\chi_{\sigma}(e^{-k} u)  + \sum_{  \hat{k}  < -(n \eta  +1)} \chi_{\sigma}(e^{-k} u) \right)\\
&:=& I_{1}+I_{2}.
\end{eqnarray*}
Using the condition $(2),$ we say that there exists $K > 0$ such that $$ \sigma(x) \leq K(|x|^{-1- \nu}), \hspace{0.3cm} \forall x < -L, \ \ \mbox{for some}\  L \in \mathbb{R}^+ \mbox{and} \  \nu >0 .$$
Since $\chi_{\sigma}(x) \leq \sigma(1+ \log x),$ we have
$$ I_{1} \leq \sum_{  \hat{k} > n \eta  +1} \sigma(1+\log u-k) \leq \sum_{  \hat{k} > n \eta  +1} \sigma(1+ \{\log u \}- \hat{k}) \ < K \sum_{  \hat{k} > n \eta  +1} |2- \hat{k}|^{-1- \nu}.$$
Moreover, from condition $(3)$ we obtain $$ \chi_{\sigma}(x) \leq 1 - \sigma(\log x -1)= \sigma(1- \log x), \hspace{0.1cm} \mbox{which gives}$$
$$ I_{2} \leq \sum_{ \hat{k} < -(n \eta  +1)} \sigma(1-\log u+k) \leq \sum_{ \hat{k} < -(n \eta  +1)} \sigma(1- \{\log u \}+ \hat{k}) \ \leq K \sum_{   \hat{k} < -(n \eta  +1)} |1+ \hat{k}|^{-1- \nu}.$$
As $\displaystyle \lim \ n\rightarrow + \infty,$ we get the desired result.

\end{proof}

\section{Neural Network Operators}
The feed-forward neural network operators (FNNs) with one hidden layer can be defined as
\begin{eqnarray*}
N_{n}(x)=\sum_{j=0}^{n}c_{j}\sigma(\langle a_{j}.x\rangle+b_{j}), \,\,\,\ x\in\mathbb{R}^{s},\,\,\,\ s\in\mathbb{N},
\end{eqnarray*}
where for $0\leq j\leq n,$ $b_{j}\in\mathbb{R}$ are thresholds, $a_{j}\in\mathbb{R}^{s}$ are the connection weights, $c_{j}\in\mathbb{R}$ are the coefficients, $\langle a_{j}.x\rangle$ is the inner product of $a_j$ and $x$, and $\sigma$ is the activation function of the network. The functions of one or several variables have been approximated by these NN (neural network) operators extensively from the last three decades. Cybenko \cite{Cybenko} and Funahashi \cite{Funahashi} have shown that any continuous function can be approximated on a compact set with uniform topology by the FNN-operators, using any continuous sigmoidal activation function. Further, Hornik et al. in \cite{Hornik}, proved that any measurable function can be approached with such a network. Cheney \cite{cheney} and Lenze \cite{lenze} used the idea of convolution kernel from the sigmoidal function to approximate the functions which is based on some results related to the theory of ridge functions. \par
The theory of NN operators has been introduced in order to study a constructive approximation process by neural networks. Anastassiou \cite{Anastassiou} pioneered the study of neural network approximation of continuous functions, where he extended the results proved by Cardalignet and Euvrard in \cite{euv}. Subsequently, Anastassiou analyzed the rate of approximation of the univariate and multivariate NN-operators activated by various sigmoidal functions in-terms of the modulus of smoothness (see\cite{Anastassiou,Anastassiou1,anas11a}). Costarelli and Spigler extended the results proved by Anastassiou in  (\cite{NN,2013d}). After this, many researchers analyzed the FNN-operators and established the various density results using different approaches, see (\cite{costaintro,costasurvey,Chen,Chui,Hahm,Leshno,Mhaskar}, etc.). Further, Costarelli and Spigler (\cite{kantNN}) studied the approximation of discontinuous functions by means of Kantorovich version of these NN-operators. Recently, the quantitative estimates of order of approximation was studied for kantorovich type NN operators in \cite{qntNN}.\par

Let $ f: [a,b] (\subset \mathbb{R}^+)  \rightarrow \mathbb{R}$ be any bounded function and $ n \in \mathbb{N}$ such that $ \ceil{na} \leq \floor{nb}.$ The exponential type neural network operators activated by the sigmoidal function $\sigma,$ are defined as
\begin{eqnarray*}
E^{\chi_{\sigma}}_{n}(f,x)=\frac{\displaystyle \sum_{k=\ceil{na}}^{\floor{nb}} f \big( e^{\frac{k}{n}} \big) \chi_{\sigma} (e^{-k} x^{n})}{\displaystyle \sum_{k=\ceil{na}}^{\floor{nb}} \chi_{\sigma} (e^{-k} x^{n})}, \hspace{0.5cm} x \in \mathbb{R^{+}}.
\end{eqnarray*}

Now, we derive the point-wise and uniform convergence results for the above family of operators $(E_{n}^{\chi_{\sigma}}f).$

\subsection{Approximation Results}

\begin{theorem} \label{Theorem1}
Let $ f: [a,b] (\subset \mathbb{R}^{+}) \rightarrow \mathbb{R}$ be any bounded function. Then, $(E_{n}^{\chi_{\sigma}}(f,x))$ converges to $ f(x)$ at every point of continuity of $f.$ Moreover, if $ f \in \mathcal{C}[a,b],$ then
$$ \lim_{n \rightarrow \infty} \| E_{n}^{\chi_{\sigma}}(f,.)-f \|_{\infty} = 0.$$
\end{theorem}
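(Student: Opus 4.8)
The plan is to run the standard approximate‑identity (Bohman--Korovkin‑type) argument, leaning on the three lemmas above: Lemma \ref{Lemma1} is the partition‑of‑unity identity, Lemma \ref{Lemma2} keeps the denominator bounded away from $0$, and Lemma \ref{Lemma3} controls the far tail. Since $\sigma$ is non‑decreasing, $\chi_{\sigma}\ge 0$, so Lemma \ref{Lemma2} gives $\sum_{k=\ceil{na}}^{\floor{nb}}\chi_{\sigma}(e^{-k}x^{n})\ge\chi_{\sigma}(e)>0$ for every $x\in[a,b]$ and every $n$ large enough that $\ceil{na}\le\floor{nb}$. A direct computation (replacing $f(x)$ by $f(x)$ times the truncated sum over the same truncated sum) then gives
$$E_{n}^{\chi_{\sigma}}(f,x)-f(x)=\frac{\displaystyle\sum_{k=\ceil{na}}^{\floor{nb}}\bigl(f(e^{k/n})-f(x)\bigr)\chi_{\sigma}(e^{-k}x^{n})}{\displaystyle\sum_{k=\ceil{na}}^{\floor{nb}}\chi_{\sigma}(e^{-k}x^{n})},$$
whence $|E_{n}^{\chi_{\sigma}}(f,x)-f(x)|\le\chi_{\sigma}(e)^{-1}\sum_{k=\ceil{na}}^{\floor{nb}}|f(e^{k/n})-f(x)|\,\chi_{\sigma}(e^{-k}x^{n})$.

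Fix $\epsilon>0$. By continuity of $f$ at $x$ (equivalently log‑continuity) together with continuity of $\exp$, pick $\delta>0$ so that $|k/n-\log x|\le\delta$ forces $|f(e^{k/n})-f(x)|<\epsilon$, and split the last sum into the block $S_{1}$ over indices with $|k-n\log x|\le n\delta$ and the block $S_{2}$ over the rest; recall $e^{-k}x^{n}=e^{n\log x-k}$, so "$|k-n\log x|$ large" means "$e^{-k}x^{n}$ is multiplicatively far from $1$". On $S_{1}$ the oscillation factor is $<\epsilon$, and since $\chi_{\sigma}\ge 0$ the truncated sum of $\chi_{\sigma}$‑values is at most the full series, which equals $1$ by Lemma \ref{Lemma1}; hence $S_{1}\le\epsilon\,\chi_{\sigma}(e)^{-1}$. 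On $S_{2}$ I would use only boundedness, $|f(e^{k/n})-f(x)|\le 2\|f\|_{\infty}$, getting $S_{2}\le 2\|f\|_{\infty}\,\chi_{\sigma}(e)^{-1}\,T_{n}(x)$, where $T_{n}(x):=\sum_{|k-n\log x|>n\delta}\chi_{\sigma}(e^{-k}x^{n})$.

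The crux is to show $T_{n}(x)\to 0$ as $n\to\infty$, and, for the uniform statement, uniformly in $x\in[a,b]$. This is the only place needing care, because the quantity playing the role of "$u$" in Lemma \ref{Lemma3} is here $u=x^{n}$, which varies with $n$, so the lemma cannot be quoted verbatim. I would instead re‑run the estimate from the proof of Lemma \ref{Lemma3} for this particular $u$: writing $\hat{k}=k-[n\log x]$ one arrives at
$$T_{n}(x)\le K\sum_{\hat{k}>n\delta+1}|2-\hat{k}|^{-1-\nu}+K\sum_{\hat{k}<-(n\delta+1)}|1+\hat{k}|^{-1-\nu},$$
and the right‑hand side is independent of $x$ and is a tail of a convergent series (since $\nu>0$), so it tends to $0$. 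This gives pointwise and uniform decay of $T_{n}$ at once.

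Putting the pieces together, $|E_{n}^{\chi_{\sigma}}(f,x)-f(x)|\le\epsilon\,\chi_{\sigma}(e)^{-1}+2\|f\|_{\infty}\,\chi_{\sigma}(e)^{-1}\,T_{n}(x)$; letting $n\to\infty$ and then $\epsilon\to0$ proves convergence at every point of continuity of $f$. If $f\in\mathcal{C}[a,b]$, then $f$ is (log‑)uniformly continuous on the compact interval $[a,b]$, so $\delta$ above may be chosen independent of $x$; the displayed estimate then holds with $\sup_{x\in[a,b]}$ on the left, $\sup_{x}T_{n}(x)\to 0$ by the previous paragraph, and the same double limit gives $\|E_{n}^{\chi_{\sigma}}(f,\cdot)-f\|_{\infty}\to0$. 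The main obstacle is thus not conceptual but the bookkeeping needed to make the tail estimate uniform in $x$ in spite of the $n$‑dependence of $x^{n}$; everything else is the routine splitting into a "close" part handled by the partition of unity and a "far" part handled by tail decay.
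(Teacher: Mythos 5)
Your proposal is correct and follows essentially the same route as the paper: bound the denominator below by $\chi_{\sigma}(e)$ via Lemma \ref{Lemma2}, split the numerator into a near part controlled by (log-)continuity together with the partition of unity of Lemma \ref{Lemma1}, and a far part controlled by boundedness of $f$ and the tail decay of Lemma \ref{Lemma3}. The one point where you go beyond the paper is worth noting: the paper simply invokes Lemma \ref{Lemma3} for the tail term, even though there $u=x^{n}$ varies with $n$ and uniformity in $x$ is needed for the sup-norm statement, whereas you correctly re-run the lemma's estimate to obtain a tail bound independent of $x$, which is exactly the bookkeeping required to make the paper's argument fully rigorous.
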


\begin{proof} We see that,
\begin{eqnarray*}
| E_{n}^{\chi_{\sigma}}(f,x) - f(x)| &=&
   \frac{\displaystyle \left| \sum_{k=\ceil{na}}^{\floor{nb}} \chi_{\sigma} (e^{-k} x^{n}) \big( f \big(e^{\frac{k}{n}} \big) - f(x) \big) \right| }{\displaystyle  \sum_{k=\ceil{na}}^{\floor{nb}} | \chi_{\sigma} (e^{-k} x^{n})|}  \\
&\leq & \frac{1}{|\chi_{\sigma}(e)|} \sum_{k=\ceil{na}}^{\floor{nb}} | \chi_{\sigma} (e^{-k} x^{n})| \Big| f \big(e^{\frac{k}{n}} \big) - f(x) \Big| \\
&\leq & \frac{1}{|\chi_{\sigma}(e)|} \sum_{|\frac{k}{n}- \log(x)| < \delta} |\chi_{\sigma} (e^{-k} x^{n})| \Big| f \big(e^{\frac{k}{n}} \big) - f(x) \Big| + \\ &&
\frac{1}{|\chi_{\sigma}(e)|} \sum_{|\frac{k}{n}- \log(x)| \geq \delta} | \chi_{\sigma} (e^{-k} x^{n})| \Big| f \big(e^{\frac{k}{n}} \big) - f(x) \Big|
:=I_{1}+I_{2}.
\end{eqnarray*}
First we evaluate $I_{1}$. Since $f \in \mathcal{C}[a,b],$ we have $ | I_{1}| <  \frac{\epsilon }{|\chi_{\sigma}(e)|} .$
Now for $I_{2},$ using Lemma \ref{Lemma3}, we have
   $$ |I_{2}| \leq \frac{2 \|f \|_{\infty}}{|\chi_{\sigma}(e)|} \epsilon. $$

On combining the estimates $ I_{1}-I_{2},$ we establish the desired result.\\

Now, we obtain the order of convergence of  $(E_{n}^{\chi_{\sigma}}f)$ for the functions belonging to the class of \textit{log-holderian functions} of order $\lambda,$ for $ 0< \lambda \leq 1$ (see \cite{bardaro7}), which is defined as follows
$$ L_{\lambda}(f)= \{ f: \mathbb{R}^+ \rightarrow \mathbb{R} \ : | f(x)-f(y)| <  H | \log x - \log y |^{\lambda};  \hspace{0.11cm}x,y,H \in \mathbb{R}^+ \}. $$

\begin{theorem} \label{Theoem2}
Let $ f \in L_{\lambda}(f)$ and $ M_{\lambda}(\chi_{\sigma}) < + \infty ,$ for some $ \lambda >0.$ Then, we have
$$ \lim_{n\rightarrow \infty} \| E_{n}^{\chi_{\sigma}}(f,.)-f \|_{\infty} = \mathcal{O} (n^{- \lambda}).$$
\end{theorem}

\begin{proof} Since $ f \in L_{\lambda}(f),$ then \ $\big| f \big(e^{\frac{k}{n}} \big) - f(x) \big| \leq H  \big|\frac{k}{n}- \log(x) \big|^{\lambda},$ whenever $ \big| \frac{k}{n}- \log(x) \big| < \delta,\ \delta >0.$ Now, we have
\begin{eqnarray*}
| E_{n}^{\chi_{\sigma}}(f,x) - f(x)| &\leq & \frac{1}{|\chi_{\sigma}(e)|} \sum_{k=\ceil{na}}^{\floor{nb}} | \chi_{\sigma} (e^{-k} x^{n})| \Big| f \big(e^{\frac{k}{n}} \big) - f(x) \Big| \\
&\leq & \frac{1}{|\chi_{\sigma}(e)|} \sum_{|\frac{k}{n}- \log(x)| < \delta} |\chi_{\sigma} (e^{-k} x^{n})| \Big| f \big(e^{\frac{k}{n}} \big) - f(x) \Big| + \\&&
\frac{1}{|\chi_{\sigma}(e)|} \sum_{|\frac{k}{n}- \log(x)| \geq \delta} |\chi_{\sigma} (e^{-k} x^{n})| \Big| f \big(e^{\frac{k}{n}} \big) - f(x) \Big|\\
&:=& I_{1}+I_{2}.
\end{eqnarray*}

It easily follows that, $|I_{1}| < \frac{H n^{- \lambda}}{|\chi_{\sigma}(1)|} M_{\lambda}(\chi_{\sigma}). $ Next, we estimate $I_{2}.$
\begin{eqnarray*}
|I_{2}| & \leq & \frac{2 \|f\|_{\infty}}{|\chi_{\sigma}(e)|} \sum_{|\frac{k}{n}- \log(x)| \geq \delta} |\chi_{\sigma} (e^{-k} x^{n})|\\
&\leq & \frac{1}{|\chi_{\sigma}(e)|}\frac{2 \|f\|_{\infty}}{n^{\lambda} \delta^{\lambda}} M_{\lambda}(\chi_{\sigma}).
\end{eqnarray*}
As $ M_{\lambda}(\chi_{\sigma}) < + \infty,$ it completes the proof.
\end{proof}

\subsection{Quantitative Estimates}
In this section, we obtain a quantitative estimate of order of convergence for the family of operators $(E_{n}^{\chi_{\sigma}}f)$ for a particular sigmoidal function. There are several examples of the sigmoidal function $\sigma$ satisfying all the assumptions of the presented theory (see \cite{anas11a,anas11d,anas11b,anas11c,NN,costab,cheang,cao,Cybenko}). We begin with an example of such sigmoidal function known as  \textit{hyperbolic tangent} sigmoidal function \cite{anas11d}, is defined as
$$ \sigma_{h}(x):= \frac{1}{2} (tanh \ x+1), \hspace{0.2cm} x \in \mathbb{R}.$$

The corresponding density function activated by the above sigmoidal function is as follows,
$$ \chi_{\sigma}(x) = \frac{1}{2}\Bigg[ \frac{x^{2}(e^4 -1)}{x^{2}(1+e^4 + e^{2} x^{2})+ e^2 } \Bigg] \ , \hspace{0.3cm} \forall x \in \mathbb{R}^{+}.$$

Now, let $ 0 < \nu < 1$ and $ n \in \mathbb{N}.$ Indeed,
\begin{eqnarray*}
\sum_{\big|\frac{k}{n}- \log(x)\big|  \geq n^{1- \nu}} \chi_{\sigma} (e^{-k} x^{n}) & = &  \sum_{\big|\frac{k}{n}- \log(x)\big|  \geq n^{1- \nu}} \chi_{\sigma} (|e^{-k} x^{n}|) \\
& < & \frac{(e^{4} - 1)}{2 e^{2}} \int_{n^{1- \nu}}^{+ \infty} \frac{1}{x^{2}} dx \\
&=& \frac{(e^{4} - 1)}{2 e^{2}} \frac{1}{n^{1- \nu}}\\
&=& \frac{3.6268}{n^{1- \nu}}.
\end{eqnarray*}

Using the above estimate, we establish a quantitative estimate of order of convergence in $ C([a,b])$ using the notion of \textit{logarithmic modulus of continuity}. \\

The logarithmic modulus of continuity is defined as
$$ \omega(f,\delta):= \sup \{|f(x)-f(y)| ;\  \mbox{whenever} \  |\log (x)-\log (y)| \leq \delta,\  \ \delta \in \mathbb{R}^{+}\} .$$
The properties of logarithmic modulus of continuity can be seen in \cite{bardaro9}.

\begin{theorem} \label{Theorem3}
Let $[a,b] \subset \mathbb{R}^+$ and $f \in C([a,b]).$ Then, for $0 < \nu <1,$ we have the following estimate,
$$ | E_{n}^{\chi_{\sigma}}(f,x) - f(x)| \leq (4.14925) \left( \omega \left(f, \frac{1}{n^{\nu}} \right) + 7.2536 \|f \|_{\infty} n^{(\nu -1)} \right),$$ where, $ 0 < \nu <1, \ \ n \in \mathbb{N}.$
\end{theorem}

\begin{proof} We have
\begin{eqnarray}
\displaystyle \nonumber
 |E_{n}^{\chi_{\sigma}}(f,x) - f(x)| &=& \frac{\displaystyle \left| \sum_{k=\ceil{na}}^{\floor{nb}}  \chi_{\sigma} (e^{-k} x^{n}) \ \big( f \big(e^{\frac{k}{n}} \big) - f(x) \big) \right| }{\displaystyle \sum_{k=\ceil{na}}^{\floor{nb}} |\chi_{\sigma} (e^{-k} x^{n})|}  \\
&\leq & \frac{1}{|\chi_{\sigma}(e)|} \sum_{k=\ceil{na}}^{\floor{nb}} | \chi_{\sigma} (e^{-k} x^{n})| \Big| f \big(e^{\frac{k}{n}} \big) - f(x) \Big|
\end{eqnarray}
Now, we can write
\begin{eqnarray*}
\Bigg|\sum_{k=\ceil{na}}^{\floor{nb}} \Big(f \big( e^{\frac{k}{n}} \big)-f(x) \Big) \ \chi_{\sigma} (e^{-k} x^{n}) \Bigg| & \leq & \sum_{k=\ceil{na}}^{\floor{nb}} | f \big( e^{\frac{k}{n}} \big)-f(x) | \ |\chi_{\sigma} (e^{-k} x^{n})| \\
&=& \sum_{|\frac{k}{n}- \log(x)| < \frac{1}{n^{\nu}}} |\chi_{\sigma} (e^{-k} x^{n})| \big| f \big(e^{\frac{k}{n}} \big) - f(x) \big| + \\&&
\sum_{|\frac{k}{n}- \log(x)| \geq \frac{1}{n^{\nu}}} | \chi_{\sigma} (e^{-k} x^{n}) | \big| f \big( e^{\frac{k}{n}} \big)-f(x) \big| \\
 & \leq & \sum_{|\frac{k}{n}- \log(x)| < \frac{1}{n^{\nu}}} |\chi_{\sigma} (e^{-k} x^{n})| \ \omega(f,\frac{1}{n^{\nu}}) + 2 \|f\|_{\infty} \sum_{|\frac{k}{n}- \log(x)| \geq \frac{1}{n^{\nu}}} |\chi_{\sigma} (e^{-k} x^{n})|  \\
& \leq & \omega \left(f,\frac{1}{n^{\nu}} \right) + 2 \|f\|_{\infty}
\Big( \frac{3.6268}{n^{1- \nu}} \Big).
\end{eqnarray*}

Using this estimate in $(2),$ we establish the result.

\end{proof}

In order to derive the higher order of approximation, we use the Taylor's formula in terms of Mellin's derivatives as the Mellin's analysis is the suitable frame to put the rigorous theory of the exponential sampling (see \cite{mamedeo,bardaro7} ).

\begin{theorem} \label{Theorem4}
Let $[a,b] \subset \mathbb{R}^+$ and $f \in C^{(2)}([a,b]).$ Then, for $0 < \nu <1,$ the following estimate holds
$$ |E_{n}^{\chi_{\sigma}}(f,x) - f(x)| \leq \ (4.14925)\ \left \{ \sum_{i=1}^{2}  \frac{\|\theta^{(i)}f \|_{\infty}}{i !} \left( n^{- i \nu } + |b-a|^{i} \frac{3.6268}{n^{1- \nu}} \right)+\frac{\omega(\theta^{(2)}f,n^{-\nu})}{2 n^{2 \nu}} \ + \frac{(3.6268)}{n^{1- \nu}} \|\theta^{(2)}f \|_{\infty} (b-a)^{2} \right \}.$$
\end{theorem}

\begin{proof} Let $f \in C^{(2)}[a,b].$ Then, by the Taylor's formula in terms of Mellin's derivatives with integral form of remainder, we have
$$  f(e^{u})= f(x)+ (\theta f)(x) \big( u - \log x \big)+  \int_{\log x}^{u} (\theta^{(2)}f)(e^\xi) \big(u - \xi \big) d \xi, \hspace{0.2cm}\mbox{where,} \ \xi \in (\log x, u).$$
Now, we can write
$$ \left( \sum_{k=\ceil{na}}^{\floor{nb}} f \big( e^{\frac{k}{n}} \big) \chi_{\sigma} (e^{-k} x^{n}) - f(x) \sum_{k=\ceil{na}}^{\floor{nb}} \chi_{\sigma} (e^{-k} x^{n}) \right)$$
\begin{eqnarray*}
 &=& (\theta f)(x) \sum_{k=\ceil{na}}^{\floor{nb}} \chi_{\sigma} (e^{-k} x^{n}) \Big( \frac{k}{n} - \log x \Big)+ \sum_{k=\ceil{na}}^{\floor{nb}} \chi_{\sigma} (e^{-k} x^{n}) \int_{\log x}^{\frac{k}{n}} (\theta^{(2)}f)(e^\xi) \Big(\frac{k}{n}- \xi \Big) d \xi \\
 &=& \sum_{i=1}^{2} \frac{(\theta^{(i)}f)(x)}{i!} \left(  \sum_{[na]}^{[nb]} \chi_{\sigma} (e^{-k} x^{n}) \right) \Big( \frac{k}{n} - \log x \Big)^{i} +\sum_{[na]}^{[nb]} \chi_{\sigma} (e^{-k} x^{n}) \int_{\log x}^{\frac{k}{n}} \big( (\theta^{(2)}f)(e^\xi) - (\theta^{(2)}f)(x) \big) \Big(\frac{k}{n}- \xi \Big) d \xi.\\
&=& \sum_{i=1}^{2} \frac{(\theta^{(i)}f)(x)}{i!} \left(  \sum_{[na]}^{[nb]} \chi_{\sigma} (e^{-k} x^{n}) \right) \Big( \frac{k}{n} - \log x \Big)^{i} +I \ \mbox{(say)}.
\end{eqnarray*}
Now, $I$ can be written as
$$ I =  \left( \sum_{ |\frac{k}{n}- \log(x)| < \frac{1}{n^{\nu}}}+\sum_{ |\frac{k}{n}- \log(x)| \geq \frac{1}{n^{\nu}}} \right)\chi_{\sigma} (e^{-k} x^{n}) \int_{\log x}^{\frac{k}{n}} \big( (\theta^{(2)}f)(e^\xi) - (\theta^{(2)}f)(x) \big) \Big(\frac{k}{n}- \xi \Big) d \xi . $$
In the view of modulus of continuity and the estimate of $\displaystyle \sum_{ |\frac{k}{n}- \log(x)| \geq \frac{1}{n^{\nu}}}  |\chi_{\sigma} (e^{-k} x^{n})|,$ we obtain

$$ |I| \leq \left( \frac{1}{2 n^{2 \nu}} \ \omega(\theta^{(2)}f,n^{-\nu})+ \frac{(3.6268)}{n^{1- \nu}} \|\theta^{(2)}f \|_{\infty} (b-a)^{2} \right).$$
Further, we estimate $\displaystyle \sum_{k=\ceil{na}}^{\floor{nb}} | \chi_{\sigma} (e^{-k} x^{n}) |  \big| \frac{k}{n} - \log x \big |^{i}$
\begin{eqnarray*}
 &=& \sum_{| \frac{k}{n} - \log x | \leq n^{- \nu}} | \chi_{\sigma} (e^{-k} x^{n}) | \  \big| \frac{k}{n} - \log x \big |^{i} + \sum_{| \frac{k}{n} - \log x | \geq n^{- \nu}} | \chi_{\sigma} (e^{-k} x^{n}) | \  \big| \frac{k}{n} - \log x \big | ^{i} \\
&\leq & n^{- i \nu } \sum_{| \frac{k}{n} - \log x | \leq n^{- \nu}} | \chi_{\sigma} (e^{-k} x^{n}) | + |b-a|^{i} \sum_{| \frac{k}{n} - \log x | \geq n^{- \nu}} | \chi_{\sigma} (e^{-k} x^{n}) | \\
& \leq & n^{- i \nu } + |b-a|^{i} \frac{3.6268}{n^{1- \nu}}.
\end{eqnarray*}

On combining all the above estimates, we get the desired result.

 \end{proof}

\end{proof}

\textbf{Remark 2:} Let $f \in C^{(N)}([a,b]),\ N \in \mathbb{N}.$ Then, the higher order Taylor's formula in terms of Mellin's derivatives with integral form of remainder is given by
$$ \displaystyle f(e^{u})= f(x)+ (\theta f)(x) \big( u - \log x \big)+ \frac{(\theta^{(2)} f)(x)}{2!} (u - \log(x))^{2}+...
+  \int_{\log x}^{u} \frac{(\theta^{(n)}f)(e^\xi)}{(n-1)!} \big(u - \xi \big)^{n-1} d \xi$$
where, $\xi \in (\log x, u).$
The proof of this Mellin's Taylor formula can be easily obtained by using the fundamental theorem of integral calculus.

\section{Exponential Type Quasi-Interpolation Operators}

In the present section, we define the notion of exponential analogue of quasi-interpolation operators (\cite{anas11d,anas12,NN,cao}). Let $\sigma$ be the sigmoidal function satisfying all the conditions $(1)-(3)$ and $\chi_{\sigma}(.)$ be the corresponding density function generated by the sigmoidal function $\sigma$. Then, for every $x \in \mathbb{R}^{+}, k \in \mathbb{Z}$ and $f: \mathbb{R}^{+} \rightarrow \mathbb{R},$ the exponential type quasi-interpolation operators are defined as,
$$ Q_{n}(f,x) :=  \displaystyle \sum_{k = - \infty}^{\infty} f \big( e^{\frac{k}{n}} \big) \chi_{\sigma} (e^{-k} x^{n}) , \hspace{0.3cm} \forall \ n \in \mathbb{N}.$$
The above operators are well-defined for the class of all real valued bounded functions defined on $\mathbb{R}^+.$ \\

First, we derive the point-wise and uniform convergence theorem for these operators in $\mathcal{C}(\mathbb{R}^{+}).$
\begin{theorem} \label{Theorem5}
Let $ f: \mathbb{R}^{+} \rightarrow  \mathbb{R}$ be any bounded function. Then, $(Q_{n}^{\chi_{\sigma}}(f,x))$ converges to $ f(x)$ at every point of continuity of $f.$ Moreover, if $ f \in \mathcal{C}(\mathbb{R}^+),$ then
$$ \lim_{n \rightarrow \infty} \| Q_{n}^{\chi_{\sigma}}(f,.)-f \|_{\infty} = 0.$$

\end{theorem}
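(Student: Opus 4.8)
The plan is to follow the pattern of the proof of Theorem~\ref{Theorem1}, with two simplifications: the operator $Q_{n}$ carries no normalising denominator, and by Lemma~\ref{Lemma1} the total mass $\sum_{k=-\infty}^{\infty}\chi_{\sigma}(e^{-k}x^{n})$ equals exactly $1$ for every $x\in\mathbb{R}^{+}$, so no lower bound of the form $\chi_{\sigma}(e)$ is needed. First I would write, using Lemma~\ref{Lemma1},
$$Q_{n}(f,x)-f(x)=\sum_{k=-\infty}^{\infty}\chi_{\sigma}(e^{-k}x^{n})\bigl(f(e^{k/n})-f(x)\bigr),$$
which is an absolutely convergent series because $f$ is bounded, $\chi_{\sigma}\ge 0$ (as $\sigma$ is non-decreasing), and the masses sum to $1$. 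Taking absolute values and splitting the summation index according to whether $\bigl|\tfrac{k}{n}-\log x\bigr|<\delta$ or $\bigl|\tfrac{k}{n}-\log x\bigr|\ge\delta$ (for a $\delta>0$ to be chosen) produces two pieces $I_{1}$ and $I_{2}$.

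For $I_{1}$: if $x$ is a point of continuity of $f$, then given $\epsilon>0$ one picks $\delta>0$ so that $|f(e^{k/n})-f(x)|<\epsilon$ whenever $\bigl|\tfrac{k}{n}-\log x\bigr|=\bigl|\log e^{k/n}-\log x\bigr|<\delta$ (recall $\log$ is a homeomorphism of $\mathbb{R}^{+}$); bounding the remaining factors by $\sum_{k=-\infty}^{\infty}\chi_{\sigma}(e^{-k}x^{n})=1$ gives $I_{1}\le\epsilon$. If in addition $f\in\mathcal{C}(\mathbb{R}^{+})$, i.e.\ $f$ is log-uniformly continuous (and bounded, so that $Q_{n}f$ is defined), then one and the same $\delta$ works for every $x$, so this bound is uniform in $x$.

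For $I_{2}$: bound $|f(e^{k/n})-f(x)|\le 2\|f\|_{\infty}$, and observe that, putting $u=x^{n}$ so that $\log u=n\log x$, the condition $\bigl|\tfrac{k}{n}-\log x\bigr|\ge\delta$ is precisely $|k-\log u|\ge n\delta$; hence
$$I_{2}\le 2\|f\|_{\infty}\sum_{|k-\log u|\ge n\delta}\chi_{\sigma}(e^{-k}u),$$
and Lemma~\ref{Lemma3} forces the right-hand side to $0$ as $n\to\infty$. Combining the two estimates yields $\limsup_{n}|Q_{n}(f,x)-f(x)|\le\epsilon$ at every point of continuity, and, in the log-uniformly continuous case, $\limsup_{n}\|Q_{n}(f,\cdot)-f\|_{\infty}\le\epsilon$; since $\epsilon$ is arbitrary, both assertions follow. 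The one point that needs care — and the main obstacle compared with Theorem~\ref{Theorem1} — is that Lemma~\ref{Lemma3} is being invoked at $u=x^{n}$, a point that moves with $n$; I would resolve this by noting that the bound produced in the proof of Lemma~\ref{Lemma3} (namely $K\sum_{\hat k>n\eta+1}|2-\hat k|^{-1-\nu}+K\sum_{\hat k<-(n\eta+1)}|1+\hat k|^{-1-\nu}$) depends on $u$ only through the bounded quantity $\{\log u\}\in[0,1)$, hence tends to $0$ uniformly over all $u\in\mathbb{R}^{+}$; this simultaneously licenses the substitution $u=x^{n}$ and supplies the uniformity in $x$ required for the sup-norm statement.
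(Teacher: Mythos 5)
Your proposal is correct and follows essentially the same route as the paper, whose proof of Theorem~\ref{Theorem5} simply repeats the argument of Theorem~\ref{Theorem1} with Lemma~\ref{Lemma1} supplying the partition of unity in place of the lower bound $\chi_{\sigma}(e)$. Your closing observation that the bound in Lemma~\ref{Lemma3} depends on $u$ only through $\{\log u\}$, and hence is uniform in $u$ (which justifies taking $u=x^{n}$ and yields the sup-norm statement), is a point the paper leaves implicit, and it is good that you made it explicit.
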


\begin{proof}
The proof follows in the similar manner as discussed in Theorem \ref{Theorem1}, with the application of Lemma \ref{Lemma1}.

\end{proof}

Moreover, we obtain a similar type of quantitative estimate of order of approximation for the family of operators $(Q_{n}^{\chi_{\sigma}}f),$ provided $f$ is bounded on $\mathbb{R}^+.$

\begin{theorem}\label{Theorem6}
Let $ f \in C(\mathbb{R}^+)$ be any bounded function. Then, the following estimate holds
$$ | Q_{n}^{\chi_{\sigma}}(f,x) - f(x)| \leq  \left ( \omega \left( f, \frac{1}{n^{\nu}}\right) + 7.2536 \|f \|_{\infty} n^{(\nu -1)} \right )$$ where, $ 0 < \nu <1, \ \ n \in \mathbb{N}.$
\end{theorem}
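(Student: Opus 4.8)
The plan is to imitate the proof of Theorem~\ref{Theorem3}, the crucial simplification being that the operator $Q_{n}^{\chi_{\sigma}}$ carries no normalising denominator: here Lemma~\ref{Lemma1} applies directly, so the constant produced by the lower bound $\chi_{\sigma}(e)$ of Lemma~\ref{Lemma2} never enters, which is exactly why the factor $4.14925$ present in Theorem~\ref{Theorem3} disappears. First I would invoke Lemma~\ref{Lemma1} to write $\sum_{k=-\infty}^{\infty}\chi_{\sigma}(e^{-k}x^{n})=1$ for every $x\in\mathbb{R}^{+}$; since $\chi_{\sigma}\ge 0$ (being $\tfrac12\bigl[\sigma(\log x+1)-\sigma(\log x-1)\bigr]$ with $\sigma$ non-decreasing) and $f$ is bounded, the bilateral series defining $Q_{n}^{\chi_{\sigma}}(f,x)$ converges absolutely, and one may write
$$
Q_{n}^{\chi_{\sigma}}(f,x)-f(x)=\sum_{k=-\infty}^{\infty}\chi_{\sigma}(e^{-k}x^{n})\bigl(f(e^{k/n})-f(x)\bigr).
$$

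Next I would take absolute values and split the series according to whether $\bigl|\tfrac{k}{n}-\log x\bigr|<\tfrac{1}{n^{\nu}}$ or $\bigl|\tfrac{k}{n}-\log x\bigr|\ge\tfrac{1}{n^{\nu}}$, obtaining two pieces $S_{1}$ and $S_{2}$. For $S_{1}$, observe that on that index set $\bigl|\log(e^{k/n})-\log x\bigr|=\bigl|\tfrac{k}{n}-\log x\bigr|<\tfrac{1}{n^{\nu}}$, so the definition of the logarithmic modulus of continuity gives $\bigl|f(e^{k/n})-f(x)\bigr|\le\omega\!\left(f,\tfrac{1}{n^{\nu}}\right)$; combined with $\sum_{k}\chi_{\sigma}(e^{-k}x^{n})\le 1$ this yields $S_{1}\le\omega\!\left(f,\tfrac{1}{n^{\nu}}\right)$. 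For $S_{2}$, bound $\bigl|f(e^{k/n})-f(x)\bigr|\le 2\|f\|_{\infty}$ and apply the tail estimate established just before Theorem~\ref{Theorem3}, namely $\sum_{|\frac{k}{n}-\log x|\ge n^{-\nu}}\chi_{\sigma}(e^{-k}x^{n})<3.6268\,n^{\nu-1}$, which holds uniformly in $x$ after the change of index $t=k-n\log x$; this gives $S_{2}\le 7.2536\,\|f\|_{\infty}\,n^{\nu-1}$. Adding the two bounds produces the asserted inequality.

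There is no genuinely hard step: the only points requiring care are the absolute convergence of the bilateral series, the fact that Lemma~\ref{Lemma1} (rather than Lemma~\ref{Lemma2}) is the relevant tool for the unnormalised operator, and the observation that the tail bound derived for the hyperbolic-tangent density is uniform in both $x$ and $n$. Everything else is the same splitting argument used in Theorems~\ref{Theorem1} and~\ref{Theorem3}; alternatively, Theorem~\ref{Theorem6} may be read as the quantitative refinement underlying the qualitative statement already recorded in Theorem~\ref{Theorem5}.
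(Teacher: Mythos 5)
Your proposal is correct and follows essentially the same route as the paper, whose proof of Theorem~\ref{Theorem6} is simply the instruction to repeat the argument of Theorem~\ref{Theorem3}: you carry out exactly that splitting with the logarithmic modulus of continuity on the near indices and the $3.6268\,n^{\nu-1}$ tail bound on the far ones. Your added observation that the unnormalised operator lets Lemma~\ref{Lemma1} replace Lemma~\ref{Lemma2}, which is precisely why the factor $4.14925$ from Theorem~\ref{Theorem3} is absent here, is the right explanation of the difference between the two estimates.
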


\begin{proof} The proof can be obtained by following the proof of Theorem \ref{Theorem3}.
\end{proof}

\section{Multivariate Exponential Type Neural Network Operators}
In this section, we extend the concept of exponential type neural network operators in the multivariate setting. Let $ \textbf{x} \in (\mathbb{R}^{+})^{N},\ N \in \mathbb{N}.$
Then, the multivariate density function activated by the sigmoidal function $\sigma$ satisfying the assumptions $(1)-(3),$ is defined as
$$ \bar{\chi}^{N}_{\sigma}(\textbf{x}) := \prod_{i=1}^{N} \chi_{\sigma}(x_{i}),\hspace{0.2cm} x_{i} \in \mathbb{R}^{+}$$
where, each $\chi_{\sigma}(x_{i})$ is the univariate density function defined on $\mathbb{R}^{+}.$ As the multivariate density function $ \bar{\chi}^{N}_{\sigma}(\textbf{x})$ is the finite product of univariate density functions $\chi_{\sigma}(x_{i}),$ it can easily be shown that $ \bar{\chi}^{N}_{\sigma}(\textbf{x})$ also satisfies the  multivariate extensions of Lemma $1,2$ and $3$ (see \cite{anas11b,anas11c}). Now, the multivariate exponential type neural network operators can be defined as follows,
$$ E^{\bar{\chi}^{N}_{\sigma}}_{n}(f,\textbf{x})=\frac{\displaystyle \sum_{k_{1}=\ceil{na}}^{\floor{nb}}...\sum_{k_{N}=\ceil{na}}^{\floor{nb}} f \big( e^{\frac{\hat{k}}{n}} \big) \ \hat{\chi}^{N}_{\sigma} (e^{- \hat{k}} \textbf{x}^{n})}{\displaystyle \sum_{k_{1}=\ceil{na}}^{\floor{nb}}...\sum_{k_{N}=\ceil{na}}^{\floor{nb}} \hat{\chi}^{N}_{\sigma} (e^{- \hat{k}} \textbf{x}^{n})} \ , \hspace{0.3cm} n \in \mathbb{N}$$
where, $\hat{k}= (k_{1},k_{2},...,k_{N}) \in \mathbb{Z}^N.$
One can study the point-wise and uniform approximation theorems for the multivariate extension of exponential type neural network operators activated by sigmoidal functions, by following the same proof technique as discussed earlier.
\section{Examples}
In this section, we present few examples of the sigmoidal function $\sigma(x)$ which satisfy the assumptions of the proposed theory.  We begin with an example of sigmoidal function which is known as \textit{logistic function} (\cite{anas12}) and defined as,
$$ \sigma_{l}(x)= \frac{1}{1+e^{-x}}, \hspace{0.2cm} x \in \mathbb{R}.$$
It is easy that the logistic function satisfies all the conditions $(1)-(3)$ of section $1$ (see \cite{NN}). \par

Another example of such smooth sigmoidal function is \textit{hyperbolic tangent} sigmoidal function (\cite{anas11d}), which is defined as,
$$ \sigma_{h}(x):= \frac{1}{2} (tanh \ x+1), \hspace{0.2cm} x \in \mathbb{R}.$$

Moreover, we can construct the example of sigmoidal function satisfying all the assumptions $(1)-(3)$ with the help of well-known \textit{B-spline functions} (\cite{but,NN}) of order $n,$ by defining as follows,
$$ \sigma_{M_{n}}(x):= \int_{- \infty}^{x} M_{n}(t) dt, \hspace{0.3cm} x \in \mathbb{R}.$$
It is clear that $\sigma_{M_{n}}(x)$ is non-decreasing on $\mathbb{R}^+$ and $0 \leq \sigma_{M_{n}}(x) \leq 1 $ for every $n \in \mathbb{N}$ and $x \in \mathbb{R}^+ .$
Indeed,

\begin{equation*}
\sigma_{M_{1}}(x) =
     \begin{cases}
      {0,} &\quad\text{if,} \ \ \ \ {x < - \frac{1}{2}} \\
     {x+ \frac{1}{2}} &\quad\text{if,} \ {- \frac{1}{2} \leq x \leq \frac{1}{2}}\\
     {1,} &\quad\text{if,} \ \ \ \ {x > \frac{1}{2}}\\
   \end{cases}
\end{equation*}

This function is also known as \textit{ramp-function} (see \cite{anas11d,anas11b}) which is an example of a discontinuous sigmoidal function. \\

Similarly, we have
\begin{equation*}
\sigma_{M_{2}}(x) =
     \begin{cases}
      {\frac{(1+x)^{2}}{2}} &\quad\text{if,} \ \ \ \ {-1< x < 0} \\
     {\left(1- \frac{(x-1)^{2}}{2} \right)} &\quad\text{if,} \ \ \ \ \ \ {0 \leq x < 1.}\\
   \end{cases}
\end{equation*} \\

\textbf{Conclusion:}  In this paper, we introduced and studied the behaviour of the new family of Exponential type neural network operators activated by the sigmoidal functions. We established the point-wise and uniform approximation theorems for these NN (Neural Network) operators in $ \mathcal{C}[a,b].$ We also studied the order of convergence for these operators in the class of \textit{log-holderian} functions.
Further, the quantitative estimates of order of approximation for the proposed NN operators in $C^{N}[a,b]$  established in terms of logarithmic modulus of continuity. We also analyzed the behaviour of the family of exponential type quasi-interpolation operators in $ \mathcal{C}(\mathbb{R}^+)$ and the quantitative estimate of order of convergence in $C(\mathbb{R}^+).$ Finally, we propose the multivariate extension of exponential type NN operators and few examples of the sigmoidal functions to which the present theory can be applied.

{}

\end{document}